\numberwithin{equation}{section}
\def\@cite#1#2{{\m@th\upshape\bfseries%
[{#1\if@tempswa{\m@th\upshape\mdseries, #2}\fi}]}}
\theoremstyle{plain}
\newtheorem{theorem}{Theorem}[section]
\newtheorem{corollary}[theorem]{Corollary}
\newtheorem{proposition}[theorem]{Proposition}
\theoremstyle{definition}
\newtheorem{definition}[theorem]{Definition}
\newtheorem{remark}[theorem]{Remark}
\theoremstyle{remark}
  \newcommand{\A}{{\mathcal{A}}}
  \newcommand{\B}{{\mathcal{B}}}
  \newcommand{\G}{{\mathcal{G}}}
\renewcommand{\H}{{\mathcal{H}}}
  \newcommand{\I}{{\mathcal{I}}}
  \newcommand{\K}{{\mathcal{K}}}
\renewcommand{\S}{{\mathcal{S}}}
\newcommand{\eps}{\varepsilon}
\def\ze{\zeta}
\def\la{\lambda}
\def\si{\sigma}
\newcommand{\bA}{\mathbb{A}}
\newcommand{\bC}{\mathbb{C}}
\newcommand{\bD}{\mathbb{D}}
\newcommand{\bF}{\mathbb{F}}
\newcommand{\bN}{\mathbb{N}}
\newcommand{\bR}{\mathbb{R}}
\newcommand{\bZ}{\mathbb{Z}}
\newcommand{\fM}{{\mathfrak{M}}}
\newcommand{\fS}{{\mathfrak{S}}}
\newcommand{\Bi}{{\mathbf{i}}}
\newcommand{\AND}{\text{ and }}
\newcommand{\foral}{\text{ for all }}
\newcommand{\qand}{\quad\text{and}\quad}
\newcommand{\qfor}{\quad\text{for}\ }
\newcommand{\ca}{\mathrm{C}^*}
\newcommand{\cenv}{\mathrm{C}^*_{\textup{env}}}
\newcommand{\ol}{\overline}
\newcommand{\wt}{\widetilde}
\newcommand{\wh}{\widehat}
\newcommand{\ad}{\operatorname{ad}}
\newcommand{\Aut}{\operatorname{Aut}}
\newcommand{\alg}{\operatorname{alg}}
\newcommand{\ev}{\operatorname{ev}}
\newcommand{\GL}{\operatorname{GL}}
\newcommand{\id}{{\operatorname{id}}}
\newcommand{\sn}{\operatorname{sn}}
\newcommand{\sca}[1]{\left\langle#1\right\rangle} %\sca{a,b} =<a,b>
\newcommand{\bo}[1]{\mathbf{#1}} %bold math symbols
\begin{document}
%%%%%%%%%%%%%%%%%%%%%%%%%%%%%%%%

%%%%%%%%%%%%%%%%%%%%%%%%%%%%%%%%
\title[Operator Algebras of Higher Numerical Semigroups]{Operator Algebras of Higher Rank Numerical Semigroups}

\author[E.T.A. Kakariadis]{Evgenios T.A. Kakariadis}
\address{School of Mathematics, Statistics and Physics\\ Newcastle University\\ Newcastle upon Tyne\\ NE1 7RU\\ UK}
\email{evgenios.kakariadis@ncl.ac.uk}

\author[E.G. Katsoulis]{Elias~G.~Katsoulis}
\address{Department of Mathematics\\ East Carolina University\\ Greenville\\ NC 27858\\USA}
\email{katsoulise@ecu.edu}

\author[X. Li]{Xin Li}
\address{School of Mathematics and Statistics\\ University of Glasgow\\ University Place\\ Glasgow\\ G12 8QQ\\ UK}
\email{xin.li@glasgow.ac.uk}

\thanks{2020 {\it  Mathematics Subject Classification.} 47L25, 46L07}

\thanks{{\it Key words and phrases:} Numerical semigroups, C*-envelope, rigidity.}

%%%%%%%%%%%%%%%%%%%%%%%%%%%%%%%%
\begin{abstract}
A higher rank numerical semigroup is a positive cone whose seminormalization is isomorphic to the free abelian semigroup.
The corresponding nonselfadjoint semigroup algebras are known to provide examples that answer Arveson's Dilation Problem to the negative.
Here we show that these algebras share the polydisc as the character space in a canonical way. 
We subsequently use this feature in order to identify higher rank numerical semigroups from the corresponding nonselfadjoint algebras.
\end{abstract}

%%%%%%%%%%%%%%%%%%%%%%%%%%%%%%%%
\maketitle

\thispagestyle{empty}

%%%%%%%%%%%%%%%%%%%%%%%%%%%%%%%%
\section{Introduction}
%%%%%%%%%%%%%%%%%%%%%%%%%%%%%%%%

Semigroup C*-algebras, i.e., C*-algebras generated by left regular representations of left-cancellative semigroups, form a natural class of C*-algebras which generalize (reduced) group C*-algebras. 
They have been studied for various classes of semigroups, for instance positive cones in totally ordered groups \cite{Co,Dou,Mur}, examples coming from group theory \cite{CL02,CL07,LOS} or examples of number-theoretic origin \cite{CDL,CEL1,CEL2}. 
We refer the reader to \cite{CELY} and the references therein.

Nonselfadjoint semigroup operator algebras are formed by considering the non-invo\-lu\-tive part of the left regular representation, or through families of representations.
They are trivially examples of semicrossed products, a construct introduced by Arveson \cite{Arv67} and formalized by Peters \cite{Pet}, that captures the properties of semigroup actions on C*-algebras.
Actions over $\bZ_+^d$ and $\bF_+^d$ (the free semigroup on $d$ generators) are by now well-studied, with a comprehensive list of pertinent papers being too long to present here.
We direct the interested reader to the surveys \cite{DFK18, DKsurv, Kat1} for more information.
However less is known for other semigroups, even at the level of the semigroup algebras, with only recent dilation results obtained for lattice-ordered semigroups \cite{DFK17, Li15}.

The realm of semigroups is too vast to be treated in one stroke and it is appropriate to reflect on a case-by-case study.
In this paper we focus on a particular class, which we call higher rank numerical semigroups as they generalize classical numerical semigroups in a natural way. 
These are the positive cones $\S$ (always viewed inside a group $\G \simeq \bZ^d$) whose \emph{seminormalization} is isomorphic to $\bZ_+^d$, namely
\[
\S_{\sn} := \{g \in \G \mid ng \in \S \textup{ eventually for } n \in \bN\} \simeq \bZ_+^d.
\]
When $d=1$ this amounts to subsemigroups of the natural numbers with finite complement. 
The motivation to study such structures is two-fold.
On one hand they play an important role in dilation theory; Dritschel-Jury-McCullough \cite{DJM16} have used the algebra associated with $\{0,2,3, \dots\}$ to provide a negative answer to Arveson's Dilation Problem, which asks whether the contractive representations of an operator algebra are automatically completely contractive.
On the other hand, higher rank numerical semigroups are determined by the characters of the algebras: in Proposition~\ref{P:inj theta} we show that these are the only semigroups for which the restriction on point evaluations identifies the character space with $\ol{\bD^d}$.

Our objective in this paper is to show that this setting is also rigid.
In Theorem~\ref{T:hr num} we show that two higher rank numerical semigroups are isomorphic if and only if their nonselfadjoint operator algebras are isomorphic.
In fact we show that being completely isometrically isomorphic coincides with being algebraically isomorphic. 
It is somehow surprising that we make an essential use of the homeomorphism between the character spaces, although they are all the same.
The meta-mathematical note here is that this is the widest class of positive cones this approach tackles.

This rigidity of nonselfadjoint operator algebras is another example of stark contrast with the C*-algebra setting.
The difference is particularly striking for the classical numerical semigroups.
In this case, it is straightforward to check that they all have isomorphic semigroup C*-algebras. 
However, the nonselfadjoint operator algebra remembers the numerical semigroup completely.

%%%%%%%%%%%%%%%%%%%%%%%%%%%%%%%%
\section{Positive Cones}
%%%%%%%%%%%%%%%%%%%%%%%%%%%%%%%%

We give some preliminary results for positive cones.
We will write $\bN = \{1, 2, \dots\}$ and $\bZ_+ = \{0, 1, 2, \dots\}$.
The reader may refer to \cite{Pau02} for the general theory of nonselfadjoint operator algebras and dilations of their representations, that we will avoid repeating here.
We just recall that the C*-envelope $\cenv(\A)$ of a nonselfadjoint operator algebra $\A$ is the co-universal C*-algebra in the sense that: (i) there exists a completely isometric homomorphism $i \colon \A \to \cenv(\A) = \ca(i(\A))$; and (ii) for any other completely isometric homomorphism $j \colon \A \to \ca(j(\A))$ there exists a unique $*$-epimorphism $\Phi \colon \ca(j(\A)) \to \cenv(\A)$ such that $\Phi \circ j = i$.
It follows by \cite{DM05} that $\cenv(\A) = \ca(\rho(\A))$ for any completely isometric representation $\rho$ of $\A$ that does not admit non-trivial contractive dilations.

Recall that a \textit{positive cone $\S$} of an abelian group $\G$ is a unital sub-semigroup of $\G$ such that: (i) $\S \cap (-\S) = (0)$; and (ii) for every $g \in \G$ there exist $s, t \in \S$ such that $g = s - t$.
The \textit{Fock representation} $V \colon \S \to \B(\ell^2(\S))$ is given by
\[
V_s e_t = e_{s + t}.
\]
We define
\[
\A(\S) := \ol{\alg}\{V_s \mid s \in \S\} \qand \ca(\S) := \ca(V_s \mid s \in \S).
\]
Since $V_{s} V_{t} = V_{s + t}$ we get that $\A(\S)$ is in fact densely spanned by the monomials $V_s$.
In particular the polynomials in $\A(\S)$ have a unique expression.
Indeed in order to reach a contradiction assume that $\sum_{s \in F} \la_s V_s = 0$ for a finite set $F \subseteq \S$ and for $0 \neq \la_s \in \bC$.
Let $s_0$ be a minimal element in $F$ with respect to the partial order that $\S$ induces in $\G$.
Then taking the compression to the $(s_0,0)$-entry gives the contradiction
\[
\la_{s_0} = \sum_{s \in F} \sca{\la_s V_s e_0, e_{s_0}} = 0.
\]

Notice that the Fock representation does not use any property of the positive cone and can be defined for general left cancellative semigroups, including groups of course.
To allow comparisons however we reserve the notation $U \colon \G \to \B(\ell^2(\G))$ for the left regular group representation of an abelian group $\G$.
In this case $\ca(\G)$ is the usual (reduced) group C*-algebra.

The following proposition can be derived as an application of dilation results for C*-dynamics, that have appeared for example in \cite{DFK17, DP15}, when applied for trivial dynamical systems.
In the absence of the dynamics, a simpler proof can be given, and it is included here for completeness.

%%%%%%%%%%%%%%%%%%%%%%%%%%%%%%%%
\begin{proposition}\label{P:is in G}
Let $\S$ be a positive cone of an abelian group $\G$.
Then the mapping $V_s \mapsto U_s$ extends to a completely isometric map $\rho \colon \A(\S) \to \ca(\G)$.
\end{proposition}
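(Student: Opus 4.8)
The plan is to exhibit a concrete completely isometric representation of $\ca(\G)$ on $\ell^2(\S)$-like data, or rather to realize the Fock representation $V$ as a \emph{compression} of a representation that is unitarily equivalent to a multiple of the left regular representation $U$, and then invoke co-universality of the C*-envelope together with the standard fact that compressions of $*$-representations give completely contractive maps. First I would observe that the Fock representation $V \colon \S \to \B(\ell^2(\S))$ is the restriction to $\S$ of the left regular representation $U \colon \G \to \B(\ell^2(\G))$, compressed to the subspace $\ell^2(\S) \subseteq \ell^2(\G)$: indeed, for $s \in \S$ and $t \in \S$ we have $U_s e_t = e_{s+t}$ with $s + t \in \S$, so $\ell^2(\S)$ is invariant for $\{U_s : s \in \S\}$ and $U_s|_{\ell^2(\S)} = V_s$. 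Hence for any polynomial $p = \sum_{s \in F} \la_s V_s$ we get $\|p\| = \|P_{\ell^2(\S)}\, U_p\, |_{\ell^2(\S)}\| \le \|U_p\|$, where $U_p := \sum_{s \in F} \la_s U_s$; the same estimate applied at every matrix level shows that $V_s \mapsto U_s$ extends to a well-defined \emph{completely contractive} homomorphism $\rho \colon \A(\S) \to \ca(\G)$ (well-definedness on polynomials being guaranteed by the uniqueness of the monomial expansion established above, on both sides).

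It remains to prove that $\rho$ is completely \emph{isometric}, i.e.\ that the compression does not shrink norms. The key point is that $\ell^2(\S)$ is not merely invariant but, up to translation, \emph{co-final} in $\ell^2(\G)$: for every $g \in \G$ there is $t \in \S$ with $g + t \in \S$ (choose $t \in \S$ with $-g \in \S - t$, which exists by the cone axiom (ii) giving $-g = s' - t'$ hence $g + t' = s' \in \S$). Concretely, I would fix a finite $F \subseteq \G$ (or $F \subseteq \S$, enlarging if necessary) and a unit vector $\xi = \sum_g \xi_g e_g \in \ell^2(\G)$ supported on a finite set $E \subseteq \G$; pick $t \in \S$ with $E + t \subseteq \S$. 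Then $U_t \xi$ is a unit vector supported in $\S$, so $U_t \xi \in \ell^2(\S)$, and since $U_t$ commutes with every $U_s$ and is a unitary, $\|U_p \xi\| = \|U_t U_p \xi\| = \|U_p (U_t \xi)\| = \|V_p (U_t\xi)\| \le \|V_p\|$. Taking the supremum over all such $\xi$ recovers $\|U_p\| \le \|V_p\| = \|\rho(V_p)\|$ wait — I would phrase it as $\|V_p\| \ge \|U_p\|$, combined with the reverse inequality from the previous paragraph to get equality.

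For the \emph{complete} isometry I would run the identical argument at matrix levels: for $P = [p_{ij}] \in M_n(\A(\S))$ with polynomial entries and a unit vector $\xi \in \ell^2(\G)^{\oplus n}$ with all components finitely supported, choose a single $t \in \S$ that works simultaneously for all (finitely many) supports, and note that $U_t^{\oplus n}$ is a unitary on $\ell^2(\G)^{\oplus n}$ commuting with $[U_{\cdot}]$-type operators; the same cancellation gives $\|[U_{p_{ij}}] \xi\| = \|[V_{p_{ij}}](U_t^{\oplus n}\xi)\| \le \|[V_{p_{ij}}]\|$, hence $\|[U_{p_{ij}}]\| \le \|[V_{p_{ij}}]\|$, and the reverse inequality is automatic from compression. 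Finally I would pass from polynomials to all of $\A(\S)$ by density and continuity. The main obstacle is conceptually minor but must be handled with care: ensuring that the translating element $t \in \S$ can be chosen to accommodate an arbitrary finite collection of supports at once (clear, since $\S$ is directed upward and a finite union of finite sets is finite), and making sure $\rho$ is genuinely well defined — the uniqueness of monomial expansions, already proved in the excerpt for $\A(\S)$ and valid verbatim for the span of $\{U_s\}$ inside $\ca(\G)$ by the same $(s_0,0)$-compression trick, takes care of this.
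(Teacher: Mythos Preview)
Your proof is correct and follows essentially the same route as the paper: one inequality by compression to $\ell^2(\S)\subseteq\ell^2(\G)$, the reverse by translating a finitely supported unit vector into $\ell^2(\S)$ via some $U_t$ with $t\in\S$ (using the cone axiom), and then repeating at all matrix levels. Two cosmetic points: the C*-envelope/co-universality mentioned in your opening plan is never actually invoked (nor needed), and in your parenthetical you have a sign slip---from $-g=s'-t'$ you get $g+s'=t'\in\S$, not $g+t'=s'$---but the idea and the argument are the paper's.
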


\begin{proof}
Since polynomials in $\A(\S)$ have a unique expression the map $V_s \mapsto U_s$ admits a unique linear extension.
It suffices to show that this extension is isometric, i.e.,
\[
\| \sum_{s \in F} \la_s V_s \| = \| \sum_{s \in F} \la_s U_s \| \quad \text{for all finite } F \subset \S.
\]
Similar arguments at any matrix level yield that this mapping is completely isometric, and thus it extends to a completely isometric map $\rho \colon \A(\S) \to \ca(\G)$.

By identifying $\ell^2(\S)$ with the obvious subspace inside $\ell^2(\G)$ we get that
\begin{align*}
\| \sum_{s \in F} \la_s V_s \|
& =
\| P_{\ell^2(\S)} \bigg(\sum_{s \in F} \la_s U_s \bigg) |_{\ell^2(\S)} \|
\leq
\| \sum_{s \in \S} \la_s U_s \|.
\end{align*}
For the reverse inequality fix $\eps> 0$.
Let $\xi = \sum_{i=1}^n k_i e_{g_i}$ in the unit ball of $\ell^2(\G)$ such that
\[
\| \sum_{s \in F} \la_s U_s \| - \eps \leq  \| \sum_{s \in F} \la_s U_s \xi \|_{\ell^2(\G)}.
\]
Since $\S$ is a positive cone we have that there are $s_i ,t_i \in \S$ such that $g_i = s_i - t_i \foral i=1, \dots, n$.
Set $t := \sum_{i=1}^n t_i \in \S$ so that $t + g_i \subset \S$ for all $i=1, \dots, n$.
Then the vector
\[
\xi' := U_t \xi = \sum_{i=1}^n k_i e_{t + g_i}
\]
is in the unit ball of $\ell^2(\S)$.
Therefore we obtain
\begin{align*}
\| \sum_{s \in F} \la_s U_s \| - \eps
& \leq
\| U_t \sum_{s \in F} \la_s U_s \xi \|_{\ell^2(\G)}
= 
\| \sum_{s \in F} \la_s U_s U_t \xi \|_{\ell^2(\G)} \\
& = 
\| \sum_{s \in F} \la_s U_s \xi' \|_{\ell^2(\G)} 
=
\| \sum_{s \in F} \la_s V_s \xi' \|_{\ell^2(\S)}
\leq
\| \sum_{s \in F} \la_s V_s \|.
\end{align*}
As $\eps > 0$ was arbitrary we have equality of the norms.
\end{proof}

%%%%%%%%%%%%%%%%%%%%%%%%%%%%%%%%
\begin{corollary}\label{C:cenv}
Let $\S$ be a positive cone of an abelian group $\G$.
Then $\ca(\G)$ is the C*-envelope of the semigroup algebra $\A(\S)$.
\end{corollary}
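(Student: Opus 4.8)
The plan is to apply the criterion recalled above: by \cite{DM05}, it suffices to produce a completely isometric representation of $\A(\S)$ whose image generates $\ca(\G)$ as a C*-algebra and which admits no non-trivial contractive dilation. The natural candidate is the map $\rho \colon \A(\S) \to \ca(\G) \subseteq \B(\ltwo(\G))$ determined by $V_s \mapsto U_s$, which is completely isometric by Proposition~\ref{P:is in G}. That $\ca(\rho(\A(\S))) = \ca(\G)$ is immediate from the positive cone property: if $g = s - t$ with $s, t \in \S$, then $U_g = U_s U_t^* \in \ca(\rho(\A(\S)))$, so $\ca(\rho(\A(\S))) \supseteq \ca(U_g \mid g \in \G) = \ca(\G)$, and the reverse inclusion is clear.

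The only step requiring an idea is the dilation statement, and it reduces to the elementary observation that a contraction which compresses to a unitary must split that unitary off as a reducing summand. Concretely: if $T \in \B(\K)$ is a contraction and $\H \subseteq \K$ is a closed subspace such that $W := P_\H T|_\H$ is unitary on $\H$, then for $\xi \in \H$ one has $\|\xi\| = \|W\xi\| = \|P_\H T\xi\| \leq \|T\xi\| \leq \|\xi\|$, which forces $T\xi = P_\H T\xi = W\xi \in \H$; applying the same computation to $T^*$ and $W^*$, together with $\langle T^*\xi, \eta\rangle = \langle \xi, W\eta\rangle$ for $\xi, \eta \in \H$, shows that $\H$ also reduces $T^*$. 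Hence $\H$ reduces $T$ and $T|_\H = W$.

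Now let $\psi \colon \A(\S) \to \B(\K)$ be a contractive dilation of $\rho$, so that $\ltwo(\G) \subseteq \K$ and $P_{\ltwo(\G)} \psi(a)|_{\ltwo(\G)} = \rho(a)$ for every $a \in \A(\S)$. For each $s \in \S$, the contraction $\psi(V_s)$ compresses to the unitary $\rho(V_s) = U_s$, so by the observation above $\ltwo(\G)$ reduces $\psi(V_s)$, with $\psi(V_s)|_{\ltwo(\G)} = U_s$. Since $\A(\S) = \ol{\spn}\{V_s \mid s \in \S\}$ and $\psi$ is linear and continuous, the orthogonal projection onto $\ltwo(\G)$ commutes with $\psi(a)$ for every $a \in \A(\S)$; thus $\psi = \rho \oplus \psi'$ relative to $\K = \ltwo(\G) \oplus \ltwo(\G)^{\perp}$, i.e., the dilation is trivial. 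Therefore $\rho$ admits no non-trivial contractive dilation, and \cite{DM05} gives $\cenv(\A(\S)) = \ca(\rho(\A(\S))) = \ca(\G)$, as desired. The main obstacle is purely conceptual — recognizing that the unitarity of the compressions $U_s$ is exactly the rigidity needed — after which the verification is bookkeeping.
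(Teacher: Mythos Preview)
Your proof is correct and follows the same approach as the paper: both use Proposition~\ref{P:is in G} to get a completely isometric embedding $\A(\S) \hookrightarrow \ca(\G)$, observe that the image generates $\ca(\G)$, and then invoke \cite{DM05} after noting that contractive dilations of unitaries are trivial. The only difference is that you spell out in detail the reducing-subspace argument that the paper summarizes in one sentence (``a contractive dilation of a unitary is trivial''), and you make explicit why $\ca(\rho(\A(\S))) = \ca(\G)$ via $U_g = U_s U_t^*$.
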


\begin{proof}
By Proposition \ref{P:is in G} we have that $\A(\S) \hookrightarrow \ca(\G)$ completely isometrically.
As the copy of $\A(\S)$ contains the generators of $\ca(\G)$ we get that $\ca(\G)$ is a C*-cover of $\A(\G)$ and it is generated by unitaries.
Since a contractive dilation of a unitary is trivial we get that this is a maximal representation of $\A(\S)$ and thus $\ca(\G)$ is the C*-envelope \cite{DM05}.
\end{proof}

The completely contractive representations of $\A(\S)$ are characterized in the following theorem.

%%%%%%%%%%%%%%%%%%%%%%%%%%%%%%%%
\begin{theorem}
Let $\S$ be a positive cone of an abelian group $\G$.
A representation $\rho \colon \A(\S) \to \B(\H)$ is completely contractive if and only if there is a unitary representation $U \colon \G \to \B(\K)$ for $\K \supseteq \H$ such that $\rho(V_s) = P_\H U_s |_\H$ for all $s \in \S$.
\end{theorem}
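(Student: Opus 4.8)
The statement identifies the completely contractive representations of $\A(\S)$ with the compressions to $\H$ of unitary representations of the ambient group $\G$, so it is a dilation theorem, and the plan is to derive it from the classical Arveson--Stinespring machinery by exploiting the completely isometric inclusion $\A(\S)\hookrightarrow\ca(\G)$ supplied by Proposition~\ref{P:is in G}. Write $\iota\colon\A(\S)\to\ca(\G)$ for that inclusion and let $u_g$ ($g\in\G$) be the canonical unitary generators of $\ca(\G)$, so that $\iota(V_s)=u_s$ for $s\in\S$. I understand representations to be unital, as is customary; this is in fact necessary for the stated equivalence, since evaluating $\rho(V_s)=P_\H U_s|_\H$ at $s=0$ forces $\rho(V_0)=I_\H$ (note also that $\rho(V_0)$ is always a contractive idempotent, hence an orthogonal projection). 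Finally, since $\G$ is abelian, hence amenable, every unitary representation of $\G$ integrates to a $*$-representation of $\ca(\G)$.

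For the backward implication, suppose we are given a unitary representation $U\colon\G\to\B(\K)$ with $\K\supseteq\H$ and $\rho(V_s)=P_\H U_s|_\H$ for all $s\in\S$; let $\pi_U\colon\ca(\G)\to\B(\K)$ be the $*$-representation integrating $U$. Then the map
\[
\rho_0:=(X\mapsto P_\H X|_\H)\circ\pi_U\circ\iota\colon\A(\S)\to\B(\H)
\]
is a composition of a complete isometry, a $*$-homomorphism and a compression, hence completely contractive, and $\rho_0(V_s)=P_\H\pi_U(u_s)|_\H=P_\H U_s|_\H=\rho(V_s)$ for every $s\in\S$. Since the monomials $V_s$ span a dense subalgebra of $\A(\S)$ on which $\rho$, being a homomorphism, is thereby determined, $\rho$ coincides with $\rho_0$ and so is completely contractive.

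The forward implication carries the content. Starting from a unital completely contractive representation $\rho$, I would transport it through the complete isometry $\iota$ to get a unital completely contractive map on the operator algebra $\iota(\A(\S))\subseteq\ca(\G)$; by Arveson's extension theorem, together with the standard correspondence between unital completely contractive maps on an operator algebra and unital completely positive maps on the generated operator system (see \cite{Pau02}), this extends to a unital completely positive map $\Phi\colon\ca(\G)\to\B(\H)$. Stinespring's dilation theorem then provides a Hilbert space $\K\supseteq\H$ and a $*$-representation $\pi\colon\ca(\G)\to\B(\K)$ with $\Phi=P_\H\,\pi(\cdot)|_\H$. Setting $U_g:=\pi(u_g)$ ($g\in\G$) gives a unitary representation of $\G$ on $\K$, because $g\mapsto u_g$ is a homomorphism from $\G$ into the unitary group of $\ca(\G)$ and $\pi$ is a unital $*$-homomorphism; and for $s\in\S$,
\[
\rho(V_s)=(\rho\circ\iota^{-1})(u_s)=\Phi(u_s)=P_\H\pi(u_s)|_\H=P_\H U_s|_\H,
\]
which is the asserted form.

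There is no deep obstacle: the substance is packaged into Proposition~\ref{P:is in G} and the classical dilation theorems. The points I would be careful about are the (harmless) reduction to unital representations, the recognition of the Stinespring $*$-representation of $\ca(\G)$ as the integrated form of a genuine unitary representation of $\G$ on the dilation space, and --- in the backward implication --- the use of amenability of the abelian group $\G$ to integrate an arbitrary unitary representation of $\G$ to a representation of $\ca(\G)$, which in this case coincides with the full group C*-algebra. One should also note that, although Proposition~\ref{P:is in G} is phrased via the left regular representation of $\G$, only the resulting completely isometric embedding into $\ca(\G)$ is needed.
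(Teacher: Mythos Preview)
Your proposal is correct and follows essentially the same route as the paper: use the completely isometric inclusion $\A(\S)\hookrightarrow\ca(\G)$ from Proposition~\ref{P:is in G}, extend a completely contractive representation to a unital completely positive map on $\ca(\G)$ and apply Stinespring for the forward direction, and for the converse integrate the unitary representation to a $*$-representation of $\ca(\G)$ and compress. The paper's proof is simply a terse version of yours; your added care about unitality, the Arveson extension step, and the use of amenability to identify the reduced and full group C*-algebras are the natural details behind the paper's two sentences.
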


\begin{proof}
By Proposition \ref{P:is in G} we have that $\A(\S) \subset \ca(\G)$.
If $\rho$ is a completely contractive representation of $\A(\S)$ then it extends to a completely contractive map of $\ca(\G)$ and Stinespring's Theorem produces the required unitary representation.
Conversely, if we have such a unitary representation then its compression to $\H$ is a completely contractive map of $\ca(\G)$.
Hence the restriction $\rho$ to $\A(\S)$ is a completely contractive map.
\end{proof}

%%%%%%%%%%%%%%%%%%%%%%%%%%%%%%%%
\begin{remark}
There is a small subtlety in the above, even for $d=1$.
One may have expected that starting with a contraction $T$ we can build a representation of $\A(\S)$ by assigning $V_s \mapsto T^s$.
However this does not hold.
In \cite{DJM16} it is shown that there is a contraction that fails to induce a completely contractive representation for the semigroup $\S = \{0, 2,3, \dots\}$.
The example given fails to be $2$-contractive.
\end{remark}

Henceforth we will restrict our attention to $\G = \bZ^d$ for some finite $d$.
For a positive cone $\S \subset \bZ_+^d$ of $\bZ^d$ we make the following identifications
\[
\S \setminus \{0\} \ni s \equiv V_s \equiv z^s \in \bA(\bD^d)
\qand
V_0 \equiv z^0 = I.
\]
This has two consequences.
First, by Corollary \ref{C:cenv}, we have a canonical identification $\A(\S) \subset \A(\bZ_+^d) \subset \ca(\bZ^d)$, and so we can use the Fourier transform on $\A(\S)$ that is inherited from $\ca(\bZ^d)$.
Secondly, by identifying an element $x \in \A(\S)$ with an analytic function $f_x \in \bA(\bD^d)$ we see that the $s$-Fourier co-efficient of $x$ coincides with $f_x^{(s)}(0)$.
From now on we will write $f$ for both $f_x$ and $x$.
A straightforward application gives the following corollary.

%%%%%%%%%%%%%%%%%%%%%%%%%%%%%%%%
\begin{corollary}\label{C:order}
Let $\S \subset \bZ_+^d$ be a positive cone in $\bZ^d$.
For $f \in \A(\bZ_+^d)$ we have that $f \in \A(\S)$ if and only if $\{s \in \bZ_+^d \mid f^{(s)}(0) \neq 0\} \subset \S$.
Therefore $s \in \S$ if and only if there exists an $f \in \A(\S)$ such that $f^{(s)}(0) \neq 0$.
\end{corollary}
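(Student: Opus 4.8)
The plan is to derive both implications from Fej\'er summation inside $\ca(\bZ^d)$. By Corollary~\ref{C:cenv} and the identifications made just before the statement, we have isometric inclusions $\A(\S) \subset \A(\bZ_+^d) \subset \ca(\bZ^d)$, the Fourier transform carries $\ca(\bZ^d)$ onto $C(\bT^d)$, and for $f \in \A(\bZ_+^d)$ the scalar $f^{(s)}(0)$ equals the $s$-th Fourier coefficient $\wh{f}(s)$, which is supported in $\bZ_+^d$; in particular every such $f$ is a genuine continuous function on $\bT^d$. For the forward implication, given $f \in \A(\S)$ I would use that $\A(\S)$ is the closed linear span of $\{V_s \mid s \in \S\}$ to choose polynomials $p_n$ supported in $\S$ with $\|p_n - f\| \to 0$. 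For every $t \in \bZ^d$ the coefficient functional $g \mapsto \wh{g}(t)$ is contractive on $\ca(\bZ^d)$ (it is integration of $g$ against the character $t$), so $\wh{f}(t) = \lim_n \wh{p_n}(t)$; when $t \notin \S$ each $\wh{p_n}(t)$ is zero, hence $f^{(t)}(0) = 0$, which yields $\{s \mid f^{(s)}(0) \neq 0\} \subseteq \S$.

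For the reverse implication, suppose $f \in \A(\bZ_+^d)$ with $\supp \wh{f} = \{s \mid f^{(s)}(0) \neq 0\} \subseteq \S$. One should not try to truncate the Fourier series to $\S$, since the corresponding projection is unbounded on $C(\bT^d)$; instead I would smooth by the $d$-dimensional Ces\`aro--Fej\'er means
\[
\si_N(f) = \sum_{s \in \bZ_+^d} \Big( \prod_{i=1}^d \big(1 - \tfrac{s_i}{N+1}\big)_+ \Big) \wh{f}(s)\, z^s .
\]
Each $\si_N(f)$ is a polynomial whose only possibly nonzero coefficients occur at points of $\supp \wh{f} \subseteq \S$, so $\si_N(f) \in \spn\{V_s \mid s \in \S\} \subseteq \A(\S)$. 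Since $f$ is continuous on $\bT^d$, the classical Fej\'er theorem gives $\si_N(f) \to f$ uniformly on $\bT^d$, that is, in the norm of $\ca(\bZ^d)$; as $\A(\S)$ is closed, $f \in \A(\S)$.

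The final assertion is then immediate: if $s \in \S$ then $f = V_s$ lies in $\A(\S)$ with $f^{(s)}(0) \neq 0$, while any $f \in \A(\S)$ with $f^{(s)}(0) \neq 0$ forces $s \in \S$ by the first implication. The argument is essentially routine; the one point worth a word of care is the convergence of $\si_N(f)$, which relies on the inclusion $\A(\bZ_+^d) \subset \ca(\bZ^d) \cong C(\bT^d)$ being isometric and its elements being honest continuous functions, so that uniform convergence coincides with convergence in the C*-norm. This is exactly what Proposition~\ref{P:is in G} and Corollary~\ref{C:cenv} provide, which is why the corollary is ``a straightforward application''.
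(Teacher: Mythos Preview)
Your argument is correct and is precisely the ``straightforward application'' the paper leaves implicit: the paper sets up the identifications $\A(\S)\subset\A(\bZ_+^d)\subset\ca(\bZ^d)\cong C(\bT^d)$ and the equality of Fourier and Taylor coefficients, and your use of continuity of coefficient functionals for one direction and Fej\'er summation for the other is exactly what this setup is meant to yield. There is no separate proof in the paper to compare against; you have simply filled in the details in the intended way.
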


%%%%%%%%%%%%%%%%%%%%%%%%%%%%%%%%
\begin{corollary}
Let $\S_1 \subset \bZ_+^{d_1}$ and $\S_2 \subset \bZ_+^{d_2}$ be positive cones in $\bZ^{d_1}$ and $\bZ^{d_2}$ respectively.
If $\A(\S_1)$ and $\A(S_2)$ are completely isometrically isomorphic then $d_1 = d_2$.
\end{corollary}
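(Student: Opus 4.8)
The plan is to combine the identification of the C*-envelope furnished by Corollary~\ref{C:cenv} with the fact that passage to the C*-envelope is an invariant of completely isometric isomorphism. First I would record the following standard observation: if $\varphi \colon \A(\S_1) \to \A(\S_2)$ is a completely isometric isomorphism, then $\cenv(\A(\S_1)) \cong \cenv(\A(\S_2))$ as C*-algebras. Indeed, writing $i_k \colon \A(\S_k) \to \cenv(\A(\S_k))$ for the canonical embeddings, the map $i_2 \circ \varphi$ is a completely isometric homomorphism of $\A(\S_1)$ whose range generates $\cenv(\A(\S_2))$ as a C*-algebra, so co-universality of $\cenv(\A(\S_1))$ yields a $*$-epimorphism $\Phi \colon \cenv(\A(\S_2)) \to \cenv(\A(\S_1))$ with $\Phi \circ i_2 \circ \varphi = i_1$; applying the same construction to $\varphi^{-1}$ produces a $*$-epimorphism in the other direction, and one checks that the two composites restrict to the identity on the generating copies of $\A(\S_1)$ and $\A(\S_2)$, hence are mutually inverse.

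Next I would apply Corollary~\ref{C:cenv} to the positive cones $\S_k \subset \bZ^{d_k}$, which gives $\cenv(\A(\S_k)) = \ca(\bZ^{d_k})$. Since $\bZ^{d_k}$ is abelian, the Fourier transform identifies $\ca(\bZ^{d_k})$ with $C(\bT^{d_k})$. Thus the hypothesis produces a $*$-isomorphism $C(\bT^{d_1}) \cong C(\bT^{d_2})$, and Gelfand duality then yields a homeomorphism $\bT^{d_1} \cong \bT^{d_2}$.

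It remains to extract $d_1 = d_2$ from a topological invariant of the torus: the covering dimension of $\bT^d$ equals $d$, so $\bT^{d_1} \cong \bT^{d_2}$ forces $d_1 = d_2$. (Equally one could compare fundamental groups, $\pi_1(\bT^d) \cong \bZ^d$, or $K$-theory, $K_0(C(\bT^d)) \cong K_1(C(\bT^d)) \cong \bZ^{2^{d-1}}$.) I do not anticipate a genuine obstacle here; once Corollary~\ref{C:cenv} is in hand, the only step that warrants a word of justification is the invariance of the C*-envelope under completely isometric isomorphism, and this is immediate from the co-universal property recalled above.
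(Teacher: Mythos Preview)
Your proof is correct and follows essentially the same route as the paper: pass to C*-envelopes via Corollary~\ref{C:cenv} and use that a completely isometric isomorphism induces a $*$-isomorphism $\ca(\bZ^{d_1}) \cong \ca(\bZ^{d_2})$. The paper states this in a single sentence and leaves the extraction of $d_1 = d_2$ implicit, whereas you spell out both the co-universality argument and the topological invariant distinguishing the tori; this is a difference in level of detail, not in strategy.
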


\begin{proof}
Being completely isometrically isomorphic yields that the associated C*-enve\-lopes $\ca(\bZ^{d_1})$ and $\ca(\bZ^{d_2})$ are $*$-isomorphic.
\end{proof}

If $\rho \colon \A \to \B$ is an algebraic epimorphism for two Banach algebras $\A$ and $\B$, then the discontinuity of $\rho$ is quantified by the ideal
\[
\fS(\rho) : = \{b \in \B \mid \exists (a_n) \subset \A \text{ such that } a_n \to 0 \text{ and } \rho(a_n) \to b \}.
\]
By the closed graph theorem $\rho$ is continuous if and only if $\fS(\rho) = (0)$.
Due to a result of Sinclair \cite{Sin75}, for any sequence $(b_n)$ in $\B$ there exists an $N \in \bN$ such that
\[
\ol{b_1 \cdots b_N \fS(\rho)} = \ol{b_1 \cdots b_n \fS(\rho)}
\; \text{and} \;
\ol{\fS(\rho) b_n \cdots b_1} = \ol{\fS(\rho) b_{N} \cdots b_1},
\foral n \geq N.
\]

%%%%%%%%%%%%%%%%%%%%%%%%%%%%%%%%
\begin{proposition}\label{P:aut cont}
Let $\S \subset \bZ_+^d$ be a positive cone in $\bZ^d$.
Then any algebraic epimorphism $\rho \colon \A \to \A(\S)$ for any Banach algebra $\A$ is automatically continuous.
\end{proposition}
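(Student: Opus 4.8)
The plan is to invoke the standard machinery for automatic continuity of epimorphisms onto commutative Banach algebras, using the Sinclair result quoted above together with the structure of $\A(\S)$ coming from its identification with analytic functions on $\bD^d$. The key point is that the \emph{separating ideal} $\fS(\rho)$, if nonzero, would have to be supported (in the Fourier picture) on a nonempty subset of $\S$, and I will derive a contradiction by applying the stabilization of $\fS(\rho)$ under multiplication by the coordinate functions $z_i \equiv V_{e_i}$ (or more precisely by a generating set of monomials of $\S$). Since multiplication by a nonconstant monomial on $\A(\S)$ raises the minimal degree of the support of any nonzero element, iterating such multiplications must eventually send any fixed nonzero element outside every finite-codimension piece, which is incompatible with the Sinclair stabilization $\ol{b_1 \cdots b_N \fS(\rho)} = \ol{b_1 \cdots b_n \fS(\rho)}$ for $n \geq N$.

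Concretely, first I would recall that $\fS(\rho)$ is a closed two-sided ideal of $\A(\S)$ (this is routine from the definition and the fact that $\rho$ is a surjective homomorphism of Banach algebras). Next, fix generators $s_1, \dots, s_k$ of $\S$ as a semigroup, so that every element of $\S \setminus \{0\}$ is a sum involving at least one $s_j$; pick any nonconstant monomial, say $b = V_{s_1}$, and apply Sinclair's lemma to the constant sequence $b_n = b$. This yields an $N$ with $\ol{b^N \fS(\rho)} = \ol{b^{N+1}\fS(\rho)} = \cdots$. Now suppose toward a contradiction that $\fS(\rho) \neq (0)$ and choose $0 \neq f \in \fS(\rho)$; then $b^N f \in \ol{b^N \fS(\rho)}$, and I would argue that the nested closures $\ol{b^n \fS(\rho)}$ actually strictly decrease — because any nonzero $g$ in $\ol{b^n\fS(\rho)}$ has all Fourier coefficients supported on $\{s \in \S : s \geq n s_1\}$ (using Corollary~\ref{C:order} and continuity of the Fourier coefficient functionals $g \mapsto g^{(s)}(0)$, which pass to the closure), whereas an element of $\fS(\rho)$ can be chosen with a coefficient at a fixed minimal element $s_0$ nonzero. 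Since $s_0 \not\geq n s_1$ for $n$ large, $\ol{b^n\fS(\rho)}$ eventually contains no element with nonzero $s_0$-coefficient among those obtained from a fixed $f$; making this precise gives that $\ol{b^n \fS(\rho)}$ strictly shrinks for all $n$, contradicting stabilization. Hence $\fS(\rho) = (0)$ and, by the closed graph theorem, $\rho$ is continuous.

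The main obstacle is the middle step: turning the qualitative statement ``multiplication by $b$ raises the support'' into a genuinely strictly decreasing chain of closed ideals. One has to be careful because closures can destroy the naive degree bookkeeping — an infinite series may be a limit of polynomials whose individual supports grow. The right tool is that for each fixed $s \in \bZ_+^d$ the evaluation $g \mapsto g^{(s)}(0)$ is $\|\cdot\|$-continuous on $\A(\bZ_+^d)$ (it is the compression to the $(s,0)$-entry in the Fock picture), so the condition ``$g^{(s)}(0) = 0$ for all $s$ not of the form $n s_1 + t$ with $t \in \S$'' is closed and is preserved under both multiplication by $b$ and passing to closures; this is exactly what is needed to run the argument cleanly. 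An alternative, possibly cleaner route, is to use all generators simultaneously: apply the stabilization to the sequence cycling through $b_1 = V_{s_1}, \dots, b_k = V_{s_k}, V_{s_1}, \dots$, so that $b_1 \cdots b_n$ eventually forces the support into $\{s : s \geq \sum m_j s_j\}$ with all $m_j \to \infty$, which drives the support out to infinity in $\bZ_+^d$ and kills any fixed $f$ outright — again contradicting stabilization unless $\fS(\rho) = (0)$.
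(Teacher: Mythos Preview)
Your approach is essentially the paper's: apply Sinclair's stabilization to powers of a single $V_s$ and use the Fourier-support constraint (elements of $\ol{V_s^n \fS(\rho)}$ are supported in $ns + \S$, since each coefficient functional $g \mapsto g^{(t)}(0)$ is continuous) to force the stabilized ideal to vanish. The paper adds one simplifying observation you do not use --- because $V_s$ is an isometry, the stabilization $\ol{V_s^{N}\fS(\rho)} = \ol{V_s^{N+1}\fS(\rho)}$ can be left-cancelled to yield $\fS(\rho) = \ol{V_s^{n}\fS(\rho)}$ for \emph{every} $n$, so one argues directly that $\fS(\rho) \subseteq \bigcap_n \ol{V_s^{n}\fS(\rho)} = (0)$ without tracking the offset $N$ or phrasing things in terms of a strictly decreasing chain.
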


\begin{proof}
Fix $s \in \S$.
By applying Sinclair's result \cite{Sin75} for $b_n = (V_s)^n$ we get that there exists an $N \in \bN$ such that
\[
\ol{(V_s)^N \fS(\rho)} = \ol{(V_s)^{N+n} \fS(\rho)} \foral n \in \bN.
\]
As $V_s$ is an isometry we have that $\fS(\rho) = \ol{(V_s)^n \fS(\rho)}$ for all $n \in \bN$.
However the Fourier transform yields $\bigcap_{n \in \bN} \ol{(V_s)^n \I} = (0)$ for any ideal $\I \subset \A(\S)$.
Indeed if $0 \neq f \in \bigcap_{n \in \bN} \ol{(V_s)^n \I}$ then for every $n$ there would be an analytic polynomial $g_n \in \I$ so that $f = z^{ns} g_n$.
If $s_0$ is minimal so that $f^{(s_0)}(0) \neq 0$ and $F \subseteq \S$ is the set of minimal $r \in \S$ so that $g^{(r)}(0) \neq 0$ for some $g \in \I$, then we get that $s_0 \in ns + F$ for all $n \in \bN$.
In particular there are $r, r' \in F$ so that $s + r = 2s + r'$ and so $r = s + r'$.
This contradicts minimality of $r$ as $s + r'$ comes from $z^sg$ for some $g \in \I$ with $g^{(r')}(0) \neq 0$.
Applying for $\I = \fS(\rho)$ gives the required $\fS(\rho) = \bigcap_{n \in \bN} \ol{(V_s)^n \fS(\rho)} = (0)$.
\end{proof}

%%%%%%%%%%%%%%%%%%%%%%%%%%%%%%%%
\section{Numerical semigroups}\label{sec:NumSgp}
%%%%%%%%%%%%%%%%%%%%%%%%%%%%%%%%

Recall that a positive cone $\S$ of a group $\G$ is called \textit{seminormal} if whenever $3s = 2t$ for $s, t \in \S$ then there exists a (necessarily unique) $p \in \S$ such that $2p = s$ and $3p = t$ \cite[Definition 1.7]{CHWW14}; equivalently if $p=t-s$ is in $\S$.
Every positive cone admits a \textit{seminormalization} which by \cite[Example 1.12]{CHWW14} can be expressed as
\[
\S_{\sn}:= \{g \in \G \mid ng \in \S \textup{ eventually for } n \in \bN\}.
\]
Alternatively $\S_{\sn}$ is the universal seminormal monoid that contains an injective copy of $\S$ \cite[Lemma 1.11]{CHWW14}.
The seminormalization of a positive cone is itself a positive cone (for the same generating group).

%%%%%%%%%%%%%%%%%%%%%%%%%%%%%%%%
\begin{remark}
Positive cones in $\bZ$ are also known as {numerical semigroups} and have several equivalent characterizations.
For example $\S$ is a numerical semigroup, if and only if $\gcd(\S) = 1$ if and only if there is an $N \in \S$ such that $n \in S$ for all $n > N$, if and only $\S_{\sn} = \bZ_+$.
We will consider their higher rank analogue.
\end{remark}

%%%%%%%%%%%%%%%%%%%%%%%%%%%%%%%%
\begin{definition}
A positive cone $\S$ of a group $\G$ is called a \textit{higher rank numerical semigroup} if $\S_{\sn} \simeq \bZ_+^d$.
If $d=1$ then $\S$ is called simply a \textit{numerical semigroup}.
\end{definition}

%%%%%%%%%%%%%%%%%%%%%%%%%%%%%%%%
\begin{remark}\label{R:rem}
The above definition implies several items.
First of all it is not hard to see that an isomorphism $\si \colon \S_1 \to \S_2$ between two positive cones induces an isomorphism $\wt{\si} \colon \G_1 \to \G_2$ of their generating groups given by $\wt{\si}(s - t) = \si(s) - \si(t)$.
Moreover $\wt{\si}$ restrics to an isomorphism of the seminormalizations.
Therefore if $\S \subset \G$ is a higher rank numerical semigroup then $\S \hookrightarrow \bZ_+^d$ and $\G \simeq \bZ^d$.
\end{remark}

Let us restrict for a moment to positive cones of $\bZ^d$ with seminormalization \textit{equal} to $\bZ_+^d$.
For notational purposes we write $\{\bo{1}, \dots, \bo{d}\}$ for the usual generators in $\bZ^d$.
We will also use the multivariable notation
\[
z^s = z^{s_1} \cdots z^{s_d} 
\qfor z = (z_1, \dots, z_d) \in \bC^d
\AND s = (s_1, \dots, s_d) \in \bZ^d.
\]
Let $\S \subset \bZ_+^d$ be a positive cone of $\bZ^d$.
The representation of Proposition \ref{P:is in G} allows to see $\A(\S)$ inside $\A(\bZ_+^d)$, and so there is a continuous map between their character spaces $\ol{\bD}^d$ and $\fM_\S$; namely
\[
\iota^* \colon \ol{\bD}^d \to \fM_\S : \ze \mapsto \ev_\ze|_{\A(\S)}.
\]
The next proposition shows that this map is injective exactly when $\S_{\sn} = \bZ_+^d$.

%%%%%%%%%%%%%%%%%%%%%%%%%%%%%%%%
\begin{proposition}\label{P:inj theta}
Let $\S \subset \bZ_+^d$ be a positive cone of $\bZ^d$.
Let $\iota^* \colon \ol{\bD}^d \to \fM_\S$ be the continuous map induced by the embedding $\A(\S) \hookrightarrow \A(\bZ_+^d)$.
Then the following are equivalent:
\begin{enumerate}
\item $\S_{\sn} = \bZ_+^d$;
\item the intersection of $\S$ with any axis is a non-trivial positive cone of $\bZ$;
\item $\iota^*$ is injective.
\end{enumerate}
In particular, $\iota^*$ is a homeomorphism when it is injective.
\end{proposition}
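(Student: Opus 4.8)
The plan is to prove the cycle of implications (i)$\Rightarrow$(ii)$\Rightarrow$(iii)$\Rightarrow$(i), and then to upgrade the injective continuous map to a homeomorphism using compactness. Throughout I will use the identification from Corollary~\ref{C:order}, namely that $s \in \S$ precisely when there is an $f \in \A(\S)$ with $f^{(s)}(0) \neq 0$, together with the description of $\S_{\sn}$ as $\{g \in \bZ^d \mid ng \in \S \text{ eventually}\}$.

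For (i)$\Rightarrow$(ii): fix an axis, say the one spanned by $\bo{j}$. Since $\S_{\sn} = \bZ_+^d$, the generator $\bo{j}$ lies in $\S_{\sn}$, so $n\bo{j} \in \S$ for all large $n$; hence $\S \cap \bZ_+\bo{j}$ is a subsemigroup of $\bZ_+\bo{j} \cong \bZ_+$ with finite complement, i.e.\ a non-trivial numerical semigroup. For (ii)$\Rightarrow$(i): first note $\S \subseteq \bZ_+^d$ forces $\S_{\sn} \subseteq \bZ_+^d$ (the seminormalization is computed inside $\G \simeq \bZ^d$ and cannot escape $\bZ_+^d$ since a negative coordinate cannot become eventually positive under multiplication by $n$). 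Conversely, (ii) gives for each $j$ some $m_j$ with $m_j\bo{j} \in \S$, hence $\bo{j} \in \S_{\sn}$; since $\S_{\sn}$ is a submonoid of $\bZ^d$ containing all the standard generators, $\S_{\sn} \supseteq \bZ_+^d$. Thus $\S_{\sn} = \bZ_+^d$.

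For (ii)$\Rightarrow$(iii): suppose $\iota^*(\ze) = \iota^*(\omega)$, i.e.\ $f(\ze) = f(\omega)$ for every $f \in \A(\S)$. For a fixed coordinate $j$, pick by (ii) an element $s = m_j\bo{j} \in \S$ with $m_j \geq 1$; then $V_s \in \A(\S)$ corresponds to the monomial $z_j^{m_j}$, so $\ze_j^{m_j} = \omega_j^{m_j}$. To pin down $\ze_j = \omega_j$ itself (not just up to an $m_j$-th root of unity) I would use that $\S$, being a positive cone, generates $\bZ^d$, so there are elements of $\S$ whose $j$-th coordinate is $1$; more carefully, choose $r, r' \in \S$ with $r - r' = \bo{j}$, so that the monomials $z^r$ and $z^{r'}$ both lie in $\A(\S)$ and $\ze^{r} = \omega^{r}$, $\ze^{r'} = \omega^{r'}$. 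Interior points where no coordinate vanishes then give $\ze_j = \ze^{r}/\ze^{r'} = \omega^{r}/\omega^{r'} = \omega_j$, and the boundary case (some coordinate of $\ze$ or $\omega$ equal to $0$) is handled separately by a direct argument using that $z_j^{m_j} \in \A(\S)$ forces the corresponding coordinates to both be $0$ or both nonzero. For (iii)$\Rightarrow$(ii): argue contrapositively. If $\S \cap \bZ_+\bo{j} = (0)$, then every $s \in \S$ has $s_j \geq 1$ implies $s$ has some other nonzero coordinate; I claim the characters $\ev_{(1,\dots,1)}$ and $\ev_{(1,\dots,1,0,1,\dots,1)}$ (zero in the $j$-th slot) agree on $\A(\S)$, because every monomial $z^s$ with $s \in \S\setminus\{0\}$ either has $s_j = 0$ (unaffected) or, having $s_j \geq 1$, must also involve another variable — wait, that alone does not kill it, so instead I would observe that $\{s \in \bZ_+^d \mid f^{(s)}(0) \neq 0\} \subseteq \S$ together with $\S \cap \bZ_+\bo{j} = (0)$ shows $\A(\S)$ contains no nonconstant function of $z_j$ alone, and more strongly that $z_j$ divides... — the clean statement is that the map $(z_1,\dots,z_d) \mapsto (z_1,\dots,z_{j-1}, \lambda z_j, z_{j+1}, \dots, z_d)$ for $|\lambda| \le 1$ — hmm; the safe route is: $\S$ is contained in the sub-positive-cone $\{s : s = 0 \text{ or } s \notin \bZ_+\bo{j}\}$'s span, and since $\bo j \notin \S_{\sn}$ (as $n\bo j \notin \S$ for all $n$), injectivity fails by evaluating at points differing only in the $j$-th coordinate appropriately. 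The main obstacle is exactly this step — making precise why failure of (ii) in coordinate $j$ produces two genuinely distinct points of $\ol{\bD}^d$ with the same restriction, and I expect to need the structural fact that $\bo j \notin \S_{\sn}$ implies every $f \in \A(\S)$ is "independent of $z_j$ at first order", iterated, so that $f(\ze) $ depends on $\ze$ only through coordinates $\neq j$ — which is false in general, so the correct formulation must instead exhibit an explicit ambiguity (e.g.\ replacing $\ze_j$ by a root of unity times $\ze_j$, where the order divides the gcd of $\{s_j : s \in \S\}$, which is $> 1$ when (ii) fails along a near-axis, or $=$ "no constraint" when it fails completely).

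Finally, for the "in particular" clause: $\ol{\bD}^d$ is compact, $\fM_\S$ is Hausdorff (being the character space of a Banach algebra with the weak-$*$ topology), and $\iota^*$ is continuous; a continuous injection from a compact space to a Hausdorff space is a homeomorphism onto its image, and one checks the image is all of $\fM_\S$ because every character $\chi$ of $\A(\S)$ is determined by the scalars $\chi(V_{\bo j})$ suitably interpreted — more directly, any character of $\A(\S)$ extends (via the contractive containment $\A(\S) \subseteq \A(\bZ_+^d)$ from Proposition~\ref{P:is in G}, or by the usual Arens–Royden / Gelfand argument) to a character of $\A(\bZ_+^d)$, hence is of the form $\ev_\ze$. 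This surjectivity plus the compact-to-Hausdorff argument gives that $\iota^*$ is a homeomorphism $\ol{\bD}^d \xrightarrow{\ \sim\ } \fM_\S$.
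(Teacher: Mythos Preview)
Your proposal has two genuine gaps. For the contrapositive of (iii)$\Rightarrow$(ii), none of your attempts lands. The root-of-unity idea would require $\gcd\{s_j : s \in \S\} > 1$, but this can fail even when $\S(j)$ is not a positive cone: e.g.\ with $d=2$ and $\S$ generated by $(2,0),(0,1),(1,1)$ one has $\S(1)=2\bZ_+$ yet $(1,1)\in\S$ has first coordinate~$1$. The trick you are missing is to set all \emph{other} coordinates to zero: for $\ze=(\ze_1,0,\dots,0)$, any $z^s$ with $s\notin\S(1)$ vanishes at $\ze$, so $\ev_\ze|_{\A(\S)}$ depends only on $\S(1)$. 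If $\S(1)=\{0\}$ this gives $\ev_{(\la,0,\dots,0)}=\ev_{0}$ for every $\la$; if $k:=\gcd(\S(1))>1$ it gives $\ev_{(\la,0,\dots,0)}=\ev_{(\mu,0,\dots,0)}$ whenever $\la,\mu$ are distinct $k$-th roots of unity. Your (ii)$\Rightarrow$(iii) injectivity argument via $r-r'=\bo{j}$ also breaks on the boundary (if some other $\ze_k=0$ then $\ze^{r'}$ may vanish and you cannot divide), though this is easily repaired: since $\S(j)$ is a numerical semigroup it contains coprime $m,n$, and $\ze_j^m=\om_j^m$, $\ze_j^n=\om_j^n$ force $\ze_j=\om_j$ once both are nonzero.

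More seriously, your surjectivity of $\iota^*$ is unjustified. You claim every character of $\A(\S)$ extends to one of $\A(\bZ_+^d)$ ``via the contractive containment'' or a ``Gelfand argument'', but Hahn--Banach only produces a contractive \emph{functional}, not a multiplicative one, and there is no general principle guaranteeing that characters of a subalgebra extend. This extension is in fact the substantive content of the paper's proof: identifying $\fM_\S$ with the semicharacter space of $\S$ (via \cite{GT06}), the paper explicitly builds the extension $\wt\chi(t):=\chi((n{+}1)t)/\chi(nt)$ (or $0$) of a semicharacter $\chi$ from $\S$ to $\S_{\sn}$, verifies it is well defined, and thereby obtains $\fM_\S\simeq\fM_{\S_{\sn}}=\ol\bD^d$ directly. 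This simultaneously yields both injectivity and surjectivity of $\iota^*$, whereas your route leaves surjectivity as an unproved assertion.
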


\begin{proof}

[(i) $\Leftrightarrow$ (ii)]:
For simplicity let us write $\S(i) : = \S \cap \{ n \Bi \mid n \in \bZ_+\}$.
If $\bZ_+^d = \S_{\sn}$ then $n \Bi \in \S$ eventually for $n \in \bN$.
Hence $\gcd(\S(i)) = 1$ giving that $\S(i)$ is a positive cone in $\bZ$.
Conversely if $\S(i)$ is a positive cone then
\[
\Bi \in \bN \Bi \subset (\S(i))_{\sn} \subset \S_{\sn},
\]
and thus $\bZ_+^d = \S_{\sn}$.

\smallskip

\noindent
[(ii) $\Leftrightarrow$ (iii)]:
Suppose that $\iota^*$ is injective.
First we show that $\S$ intersects with all axes.
Assume without loss of generality that $\S(1) = \{0\}$.
Then for every $s \in \S$ we have that $s_1 = 0$, and so
\[
\ev_{(\la, 0, \dots, 0)}(z^{s}) = 0 = \ev_{(0,\dots,0)}(z^{s})
\]
for any $\la \neq 0$.
Hence $\ev_{(\la, 0, \dots, 0)} = \ev_{(0,\dots,0)}$ for any $\la \neq 0$, which contradicts injectivity of $\iota^*$.
Secondly we show that every $\S(i)$ is a positive cone in $\bZ$.
Without loss of generality assume that $\S(1)$ is not such and set $k : = \gcd(S(1)) \neq 1$.
Let $\la, \mu$ be two distinct non-trivial $k$-th roots of the unit.
If $s \in \S(1)$ then
\[
\ev_{(\la, 0, \dots, 0)}(z^s) = \la^{s_1} = \mu^{s_1} = \ev_{(\mu, 0, \dots, 0)}(z^s).
\]
If $s \in \S \setminus \S(1)$ then there is at least one $j \in \{2, \dots, d\}$ such that $s_j \neq 0$ and so
\[
\ev_{(\la, 0, \dots, 0)}(z^{s}) = 0 = \ev_{(\mu, 0, \dots, 0)}(z^s).
\]
Therefore $\ev_{(\la, 0, \dots, 0)} = \ev_{(\mu, 0, \dots, 0)}$ which again contradicts injectivity of $\iota^*$.

For the converse recall that a semicharacter on $\S$ is a semigroup homomorphism $\chi \colon \S \to \ol{\bD}$. 
By \cite[Theorem 4.2.1]{GT06} the character space of $\A(\S)$ is homeomorphic to the semicharacter space of $\S$.
We will show that every semicharacter of $\S$ extends uniquely to a semicharacter of $\S_{\sn}$.
This will give that the character spaces of $\A(\S)$ and $\A(\S_{\sn})$ are homeomorphic, and so if $\S_{\sn} = \bZ_+^d$ then $\iota^*$ is a homeomorphism.
To this end for $\chi \colon \S \to \ol{\bD}$ we define $\wt{\chi} \colon \S_{\sn} \to \ol{\bD}$ by
\[
\wt{\chi}(t) :=
\begin{cases}
\chi((n+1)t) / \chi(nt) & \textup{ if } (n+1)t, nt \in \S \textup{ and } \chi(nt) \neq 0 \textup{ for some } n \in \bN, \\
0 & \textup{ if } \chi(nt) = 0 \textup{ for every } n \in \bN \textup{ with } nt \in \S.
\end{cases}
\]
To see that $\wt{\chi}$ is well defined first suppose that $\chi(nt) = 0$ for some $n \in \bN$.
Then for every $m \in \bN$ with $m t \in \S$ we have that 
\[
\chi(mt)^n = \chi(m n t) = \chi(nt)^m = 0,
\]
and so $\chi(mt) = 0$.
Now by definition for every $t \in \S_{\sn}$ there exists an $n \in \bN$ such that $nt \in \S$ and $(n+1) t \in \S$.
If there are distinct $n, m \in \bN$ such that $(n+1)t, nt \in \S$ and $(m+1)t, mt \in \S$ then we have
\[
\chi((n+1)t) \chi(mt) = \chi((n+m+1)t) = \chi((m+1)t) \chi(nt),
\]
which shows that $\wt{\chi}(t)$ does not depend on the choice of $n$.
\end{proof}

%%%%%%%%%%%%%%%%%%%%%%%%%%%%%%%%
\begin{remark}
Contrary to \cite[Proposition 3.5.6]{GT06}, we use that semicharacters of $\S$ extend uniquely to the seminormalization of $\S$ rather than to the \textit{normalization} $\S_{\textup{n}}: = \{g \in \G \mid \exists n \in \bN \textup{ such that } ng \in \S\}$.
\end{remark}

%%%%%%%%%%%%%%%%%%%%%%%%%%%%%%%%
\begin{remark}
It is worth noticing that the equivalence of items (ii) and (iii) of Proposition \ref{P:inj theta} can follow also by the universal property of seminormalizations, by applying \cite[Lemma 1.11]{CHWW14} for the pointed monoid $\ol{\bD}$.
Therein the existence of the map $\iota^*$ follows by applying a Zorn's Lemma.

However it is the analytic form of $\iota^*$ that we will be requiring and wish to make explicit here.
Suppose that $\S \subset \bZ_+^d$ is a positive cone with $\S_{\sn} = \bZ_+^d$.
For every $\Bi \in \bZ_+^d$ let $n_i \in \bN$ such that both $(n_i +1) \Bi$ and $n_i \Bi$ are in $\S$.
Proposition \ref{P:inj theta} asserts that, if $\chi \in \fM_\S$ with $\chi = \ev_\zeta|_{\A(\S)}$, then $\zeta$ is uniquely given by
\begin{equation}\label{eq:analytic}
\zeta_i = 
\begin{cases} 
\chi(z_i^{n_i +1}) / \chi(z_i^{n_i}) & \text{ if } \chi(z_i^{n_i}) \neq 0, \\ 
0 & \text{ if } \chi(z_i^{n_i}) = 0. 
\end{cases}
\end{equation}
\end{remark}

Recall that if $\S_1 \simeq \S_2$, and they are both positive cones of $\bZ$ then $\S_1 = \S_2$.
This property passes also to higher ranks.

%%%%%%%%%%%%%%%%%%%%%%%%%%%%%%%%
\begin{proposition}\label{P:iso class}
Let $\S_1 \subset \bZ^{d_1}$ and $\S_2 \subset \bZ^{d_2}$ be positive cones such that $(\S_1)_{\sn} = \bZ_+^{d_1}$ and $(\S_2)_{\sn} = \bZ_+^{d_2}$.
Then $\S_1 \simeq \S_2$ if and only if $d_1 = d_2$ and $\S_1 = \S_2$ up to a permutation of the coordinates.
\end{proposition}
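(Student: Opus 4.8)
The forward direction is immediate: a coordinate permutation $\pi$ of $\bZ^d$ carries $\bZ_+^d$ onto itself and, if $\S_2 = \pi(\S_1)$, it restricts to a semigroup isomorphism $\S_1 \to \S_2$. So the plan is to establish the converse, and the strategy is to transport the (well known) rigidity of $\bZ_+^d$ back to $\S_1$ and $\S_2$ through Remark \ref{R:rem}, exactly in the spirit of the $d=1$ case recalled just before the statement.

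Assume $\si \colon \S_1 \to \S_2$ is a semigroup isomorphism. First I would apply Remark \ref{R:rem}: $\si$ extends to a group isomorphism $\wt\si \colon \bZ^{d_1} \to \bZ^{d_2}$ with $\wt\si(s-t) = \si(s)-\si(t)$, and this $\wt\si$ restricts to an isomorphism of the seminormalizations. (The last point is the elementary observation that $ng \in \S_1$ eventually precisely when $n\wt\si(g) = \si(ng) \in \S_2$ eventually, so $\wt\si$ and $\wt\si^{-1} = \wt{\si^{-1}}$ interchange $(\S_1)_{\sn}$ and $(\S_2)_{\sn}$.) Under the hypotheses this says that $\wt\si$ restricts to a semigroup isomorphism $\bZ_+^{d_1} \to \bZ_+^{d_2}$; comparing the ranks of the ambient free abelian groups already forces $d_1 = d_2 =: d$.

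It then remains to see that a semigroup isomorphism $\theta \colon \bZ_+^d \to \bZ_+^d$ must be a coordinate permutation. I would argue via irreducibles: call $x \in \bZ_+^d \setminus \{0\}$ irreducible if $x = y+z$ with $y,z \in \bZ_+^d$ forces $y=0$ or $z=0$, and note that the irreducibles of $\bZ_+^d$ are exactly $\bo{1}, \dots, \bo{d}$. Any semigroup isomorphism fixes $0$ and preserves irreducibility, hence permutes $\{\bo{1}, \dots, \bo{d}\}$ by some permutation, and since these elements generate $\bZ_+^d$ additively, $\theta$ coincides with the associated coordinate permutation $\pi$. Applying this to $\theta = \wt\si|_{\bZ_+^d}$ and restricting back to $\S_1$, we get $\S_2 = \si(\S_1) = \wt\si(\S_1) = \pi(\S_1)$, which is the assertion.

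The proposition is thus quite soft; the one step carrying any content is the identification of the irreducibles of $\bZ_+^d$ with the standard generators — equivalently, the fact that a matrix in $\GL_d(\bZ)$ with non-negative entries and non-negative inverse is a permutation matrix. I expect everything else to be routine bookkeeping, and I would point out that the hypothesis $(\S_i)_{\sn} = \bZ_+^{d_i}$ is used precisely at the point where Remark \ref{R:rem} lets us replace $\S_i$ by its seminormalization.
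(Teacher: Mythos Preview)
Your proposal is correct and follows essentially the same line as the paper: extend $\si$ to a group isomorphism $\wt\si \in \GL_d(\bZ)$ and show its matrix is a permutation. The only cosmetic difference is packaging --- you route through Remark~\ref{R:rem} and the irreducibles of $(\S_i)_{\sn}=\bZ_+^d$, whereas the paper picks $n_i\Bi\in\S_1$ directly to see that the columns of $U$ (and of $U^{-1}$) lie in $\bZ_+^d$; as you yourself observe at the end, these are two phrasings of the same fact that a matrix in $\GL_d(\bZ)$ with non-negative entries and non-negative inverse is a permutation matrix.
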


\begin{proof}
Let $\si \colon \S_1 \to \S_2$ be a semigroup isomorphism.
Since it defines an isomorphism between the groups generated by $\S_1$ and $\S_2$, we get that $d_1 = d_2$, which we name as $d$ from now on.
Moreover the induced group isomorphism is given by a unitary map, say $U \in \GL_d(\bZ)$.
For every $i \in \{1, \dots, d\}$, choose $n_i \in \bN$ so that $n_i \Bi \in \S_1(i)$.
Then the $i$-th column $\si(n_i \Bi)$ of $n_i U$ is in $\S_2 \subset \bZ_+^d$ and so it has non-negative entries.
Hence all entries of $U$ are non-negative integers.
As the same holds for $U^{-1}$ we get that $U$ is a permutation matrix.
\end{proof}

We now have arrived to the main rigidity result.
We will be using an idea of \cite{DRS11} for rotating isomorphisms to vacuum preserving isomorphisms.

%%%%%%%%%%%%%%%%%%%%%%%%%%%%%%%%
\begin{theorem}\label{T:hr num}
Let $\S_1 \subset \G_1$ and $\S_2 \subset \G_2$ be higher rank numerical semigroups.
Then the following are equivalent:
\begin{enumerate}
\item $\S_1 \simeq \S_2$;
\item $\A(\S_1) \simeq \A(\S_2)$ by a completely isometric isomorphism;
\item $\A(\S_1) \simeq \A(\S_2)$ by an algebraic isomorphism.
\end{enumerate}
\end{theorem}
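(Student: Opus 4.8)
The plan is to prove the cycle \textup{(i)}$\Rightarrow$\textup{(ii)}$\Rightarrow$\textup{(iii)}$\Rightarrow$\textup{(i)}, with the last implication carrying essentially all the weight. For \textup{(i)}$\Rightarrow$\textup{(ii)} I would use Remark~\ref{R:rem} to realise each $\S_i$ inside $\bZ_+^{d_i}$ with $(\S_i)_{\sn}=\bZ_+^{d_i}$, then apply Proposition~\ref{P:iso class} to get $d_1=d_2=:d$ and a coordinate permutation $\si$ with $\si(\S_1)=\S_2$; the permutation unitary $W\colon\ell^2(\S_1)\to\ell^2(\S_2)$, $e_s\mapsto e_{\si(s)}$, conjugates $V^{(1)}_s$ to $V^{(2)}_{\si(s)}$, so $\Ad W$ restricts to a completely isometric isomorphism $\A(\S_1)\to\A(\S_2)$. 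The implication \textup{(ii)}$\Rightarrow$\textup{(iii)} is immediate.

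For \textup{(iii)}$\Rightarrow$\textup{(i)}, start from an algebraic isomorphism $\rho\colon\A(\S_1)\to\A(\S_2)$; by Remark~\ref{R:rem} assume $\S_i\subseteq\bZ_+^{d_i}$, $(\S_i)_{\sn}=\bZ_+^{d_i}$, so Proposition~\ref{P:inj theta} identifies $\fM_{\S_i}$ with $\ol{\bD}^{d_i}$ through $\iota^*$. By Proposition~\ref{P:aut cont} both $\rho$ and $\rho^{-1}$ are continuous, hence $\rho^*\colon\fM_{\S_2}\to\fM_{\S_1}$ is a homeomorphism and $\ol{\bD}^{d_1}\cong\ol{\bD}^{d_2}$, forcing $d_1=d_2=:d$. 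As the algebras $\A(\S_i)\subseteq\bA(\bD^d)$ are semisimple (the Gelfand transform being faithful), $\rho$ is implemented by composition with a self-homeomorphism $\Psi$ of $\ol{\bD}^d$, namely $\rho(f)=f\circ\Psi$; $\Psi$ preserves the open polydisc, and since $\Psi_i^{n_i}=\rho(z_i^{n_i})$ is holomorphic on $\bD^d$ (for $n_i$ with $n_i\Bi\in\S_1$), so is $\Psi_i$ by a removable-singularity argument (Rad\'o's theorem), and likewise for $\Psi^{-1}$; thus $\Psi|_{\bD^d}\in\Aut(\bD^d)$ and $\Psi(\ze)=(\varphi_1(\ze_{\tau(1)}),\dots,\varphi_d(\ze_{\tau(d)}))$ with $\tau$ a permutation and $\varphi_i\in\Aut(\bD)$. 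Composing $\rho$ with the coordinate-permutation isomorphism $\A(\S_2)\to\A(\tau\S_2)$ — and using that $\S_1\simeq\tau\S_2$ iff $\S_1\simeq\S_2$ — I may assume $\tau=\id$.

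What remains is to control the M\"obius factors $\varphi_i$. I would first dispatch $d=1$. If $\S_1=\bZ_+$ then $\A(\S_1)=\bA(\bD)$ is preserved by $\varphi_1$, so $\A(\S_2)=\bA(\bD)$ and $\S_2=\bZ_+$ (and symmetrically). If $\S_1\ne\bZ_+$, then $\S_2\ne\bZ_+$ by the previous line applied to $\rho^{-1}$, so $1$ is a gap of $\S_2$; put $N=1+\max(\bZ_+\setminus\S_1)$. If $q=\varphi_1^{-1}(0)\ne0$, then every $h\in\bA(\bD)$ vanishing to order $\ge N$ at $q$ satisfies $h\circ\varphi_1^{-1}\in\A(\S_1)$, hence $h\in\A(\S_1)\circ\varphi_1=\A(\S_2)$; taking $h=(z-q)^N$, whose derivative at $0$ is nonzero, contradicts that $1$ is a gap of $\S_2$. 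So $\varphi_1$ is a rotation, $\rho$ scales each monomial, and $\A(\S_1)=\A(\S_2)$, whence $\S_1=\S_2$. For general $d$, restricting $\rho$ to functions of the single variable $\ze_j$ yields an isomorphism of the axial numerical-semigroup algebras $\A(\S_1(j))\to\A(\S_2(j))$ (the $\S_i(j)$ being numerical semigroups by Proposition~\ref{P:inj theta}); by the one-variable case, if $\S_1(j)$ is not the full axis then $\varphi_j$ is a rotation and $\S_1(j)=\S_2(j)$. If every $\varphi_j$ is a rotation then $\rho$ scales each $V_s$ and $\S_1=\S_2$. Otherwise set $J=\{j:\varphi_j\text{ not a rotation}\}$; for $j\in J$ the axis is full in both $\S_i$ and $\varphi_j(0)\ne0$, so $\varphi_j(\ze_j)^a$ has nonzero constant Fourier coefficient for all $a$. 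Comparing the Fourier supports of $\rho(V_s)$ and of $\rho^{-1}$ on the relevant monomials shows that $s\in\S_i$ precisely when the element obtained by zeroing the $J$-coordinates of $s$ lies in $\S_i$; hence $\S_i=\bZ_+^J\times\S_i^{\flat}$ with $\S_i^{\flat}=\S_i\cap(\{0\}^J\times\bZ_+^{J^c})$, and restricting $\rho$ to the variables $\ze_j$, $j\notin J$, on which $\Psi$ acts by rotations, gives $\S_1^{\flat}=\S_2^{\flat}$; therefore $\S_1=\S_2$.

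The step I expect to be the main obstacle is this last one. One would like, following \cite{DRS11}, to rotate $\rho$ to a vacuum-preserving isomorphism, but the point evaluation at the vacuum is not preserved in general — and for $\S=\bZ_+^d$ it genuinely is not, since the polydisc algebra carries the full automorphism group of $\bD^d$. The way around is the structural fact that a surviving M\"obius factor can only sit over a coordinate along which $\S_i$ is a direct product, which is exactly what the axis-by-axis one-variable rigidity detects; the homeomorphism of character spaces, together with its explicit analytic form from Proposition~\ref{P:inj theta}, is what makes the reduction to a polydisc automorphism possible in the first place.
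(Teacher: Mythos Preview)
Your argument is correct and takes a genuinely different path from the paper's. Both proofs reduce to showing that the induced map $\Psi=\rho^*$ on $\ol{\bD}^d$ is, after a coordinate permutation, a product of disc automorphisms $\varphi_j$. The paper then invokes the rotation trick of \cite{DRS11}: since rotations restrict to automorphisms of each $\A(\S_i)$, one can form the fivefold composite $\rho\circ\rho_{\vartheta_1}\circ\rho^{-1}\circ\rho_{\vartheta_2}\circ\rho$ and, by a geometric circle-intersection argument, choose $\vartheta_1,\vartheta_2$ coordinate\-wise so that the new isomorphism sends the vacuum to the vacuum; Schwarz then forces every $\varphi_j$ to be a rotation, and $\rho=\id$ follows. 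You instead analyse the M\"obius factors directly: a gap argument shows that a non-rotational $\varphi_j$ forces the $j$-th axis to be full in both semigroups, and a Fourier-support computation (the nonvanishing constant term of $\varphi_j(\zeta_j)^{s_j}$) then yields the product decomposition $\S_i=\bZ_+^J\times\S_i^\flat$, reducing to the all-rotations case on $J^c$. Your route is more elementary in that it avoids the \cite{DRS11} machinery, and as a byproduct it identifies exactly which higher rank numerical semigroups admit non-vacuum-preserving isomorphisms; the paper's route is more uniform and yields the clean corollary that every vacuum-preserving isomorphism is a permutation followed by a rotation.

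One comment on your final paragraph: your stated ``main obstacle'' is not actually an obstacle. The \cite{DRS11} rotation trick \emph{does} apply here, even when $\S=\bZ_+^d$; it does not require that every automorphism fix the vacuum, only that the rotation automorphisms be available on both sides, which they always are. The paper carries this out (coordinatewise, once $\Psi$ is diagonal), and it is in fact the simpler way to finish. Your workaround via the product structure is a nice alternative, but you should not present it as forced. A minor presentational point: you assert that $\Psi$ preserves $\bD^d$ before establishing holomorphicity; this is fine by invariance of domain, but the paper deduces it from the open mapping theorem after proving each $\Psi_i$ is holomorphic. Your Rad\'o-type argument for the latter (continuous with holomorphic $n_i$-th power) is a legitimate variant of the paper's removable-singularity step via the quotient $f_i/g_i$.
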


\begin{proof}
First we remark that semigroup isomorphisms induce completely isometric isomorphisms.
Indeed for an isomorphism $\si \colon \S_1 \to \S_2$ we can define $U \colon \ell^2(\S_1) \to \ell^2(\S_2)$ to be the permutation unitary $U e_t = e_{\si(t)}$.
It then follows that $U V_s U^* = V_{\si(s)}$ for all $s \in \S_1$.
Therefore it suffices to show that item (iii) implies item (i).
Recall that the isomorphism in item (iii) is automatically bounded by Proposition \ref{P:aut cont}.

Combining the above with Remark \ref{R:rem}, we may assume without loss of generality that $\S_1 \subset \bZ_+^{d_1}$ with $(\S_1)_{\sn} = \bZ_+^{d_1}$, and likewise for $\S_2$.
Thus by Proposition \ref{P:iso class} it suffices to show that item (iii) implies that $d_1 = d_2$ and that $\S_1 = \S_2$ up to a permutation of the variables.

An algebraic isomorphism $\rho$ between $\A(\S_1)$ and $\A(S_2)$ implements a homeomorphism $\rho^*$ of their character spaces $\ol{\bD}^{d_1}$ and $\ol{\bD}^{d_2}$.
Therefore $d_1 = d_2$, which we name as $d$ henceforth.
For convenience we will treat the cases $d=1$ and $d>1$ separately.

\smallskip

\noindent
\textbf{The one-variable case.}
For $d=1$ we have $\fM_{\S_1} \simeq \fM_{\S_2} \simeq \ol{\bD}$.
We employ a technique from \cite{DRS11} to rotate the isomorphism to one that matches the zeroes of the character space.
To this end, for $\vartheta \in \bR$ let the rotation map
\[
\rho_\vartheta = \ad_{u_\vartheta} \textup{ with } u_{\vartheta} \colon \ell^2(\bZ_+) \to \ell^2(\bZ_+) : e_n \mapsto e^{i\vartheta n} e_n.
\]
It is immediate that $\rho_\vartheta$ gives an automorphism of $\A(\bZ_+)$ that sends every generator to a scalar multiple of the same generator.
Hence the restriction to $\A(\S_1)$ and to $\A(\S_2)$ gives complete isometric automorphisms such that
\[
(\rho_\vartheta)^*(\ze) = e^{i \vartheta} \ze \quad \foral \ze \in \bD.
\]
Suppose that $\ze := \rho^*(0) \neq 0$ and so $ \eta:= (\rho^{-1})^*(0) \neq 0$.
Then
\[
C_1 := \{\rho_\vartheta^*(\zeta) \mid \vartheta \in [0,2\pi]\}
\]
defines a circle of radius $|\zeta|$ around the origin in $\fM_{\S_1}$.
Likewise we can form a circle 
\[
C_2 := \{\rho_{\vartheta}^*(\eta) \mid \vartheta \in [0,2\pi]\}
\]
of radius $|\eta|$ around the origin in $\fM_{\S_2}$.
By applying $\rho^*$ we can implement a closed curve
\[
\rho^*(C_2) = \{(\rho_\vartheta \circ \rho)^*(\eta) \mid \vartheta \in [0,2\pi]\}
\]
that has $\zeta$ in its interior and passes through $0 \in \fM_{S_1}$.
Since $\fM_{\S_1} \simeq \ol{\bD}$ there exists a point of intersection $\ze' \neq \ze$ between $\rho^*(C_2)$ and $C_1$.
Then $\eta':= (\rho^{-1})^*(\ze') \neq \eta$, and $\eta'$ lies on the circle $C_2$.
Choose $\vartheta_1$ that rotates $\ze$ to $\ze'$, and $\vartheta_2$ that rotates $\eta'$ to $\eta$.
Then we can define the isomorphism
\[
\wh{\rho}:= \rho \circ \rho_{\vartheta_1} \circ \rho^{-1} \circ \rho_{\vartheta_2} \circ \rho \colon \A(\S_1) \to \A(\S_2),
\]
for which
\[
(\wh{\rho})^*(0)
=
\rho^* \circ \rho_{\vartheta_2}^* \circ (\rho^{-1})^* \circ \rho_{\vartheta_1}^* \circ \rho^*(0)
=
\rho^* \circ \rho_{\vartheta_2}^*(\eta')
=
\rho^*(\eta) = 0.
\]
This transformation is depicted in the following figure.

%%%%%%%%%%%%%%%%%%%%%%%%
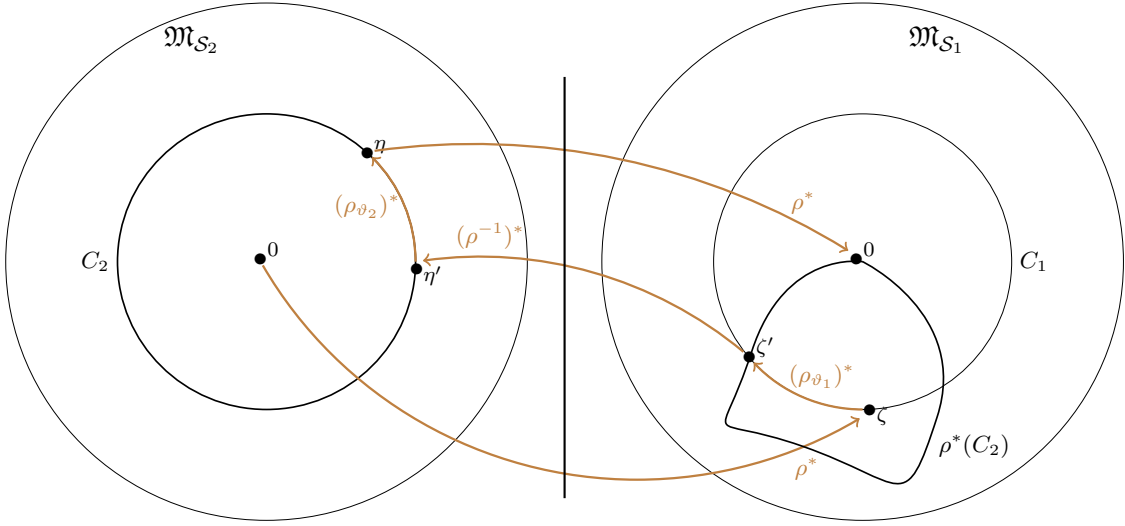
\begin{figure}[h]
\begin{tikzpicture}[thick,scale=.98, every node/.style={transform shape}]
\node (A) at (-4,.1) {$\bullet^0$};
\node (B) at (-2.56,1.51) {$\bullet^\eta$};
\node (C) at (-1.85,-.17) {$\bullet_{\eta'}$};
\draw [line width=.22mm] (-4,0) circle (2cm);
\draw [line width=.05mm] (-4,0) circle (3.5cm);

\node (D) at (4,.1) {$\bullet^0$};
\node (E) at (4.18,-2.07) {$\bullet_\zeta$};
\node (F) at (2.61,-1.2) {$\bullet^{\zeta'}$};
\draw [line width=.1mm] (4,0) circle (2cm);
\draw [line width=.05mm] (4,0) circle (3.5cm);

\draw [->, line width=.3mm, brown] (-2,0) arc (0:45:2cm);
\draw [->, line width=.3mm, brown] (4,-2) arc (270:223:2cm);
\draw [->, line width=.3mm, brown] (-4.05,-.05) arc (210:301:5.8cm);
\draw [->, line width=.3mm, brown] (2.4,-1.22) arc (50:98:5.5cm);
\draw [->, line width=.3mm, brown] (-2.55,1.5) arc (98:58:9.5cm);

%\draw [line width=.08mm] plot [smooth, tension=2] coordinates {(-4.03,-.03) (-1.9,0) (-2.2,2) (-3.8,2) (-4.03,-.03)};
\draw [line width=.22mm] plot [smooth, tension=2] coordinates {(4,0) (2.51,-1.22) (3.2,-2.5) (5,-2.1) (4,0)};

\node (G) at (-6.3,0) {{\footnotesize $C_2$}};
\node (H) at (6.3,0) {{\footnotesize $C_1$}};
%\node (I) at (-4.4,2.8) {{\footnotesize $(\rho_\vartheta \circ \rho^{-1})^*(\zeta)$}};
\node (J) at (5.5,-2.5) {{\footnotesize $\rho^*(C_2)$}};

\node (K) at (-5,3) {$\mathfrak{M}_{\mathcal{S}_2}$};
\node (L) at (5,3) {$\mathfrak{M}_{\mathcal{S}_1}$};

\draw (0,2.5) -- (0,-3.2);

\node (M) at (3.45,-1.55) {\textcolor{brown}{{\footnotesize $(\rho_{\vartheta_1})^*$}}};
\node (N) at (-2.65,.75) {\textcolor{brown}{{\footnotesize $(\rho_{\vartheta_2})^*$}}};
\node (O) at (3.25,-2.8) {\textcolor{brown}{{\footnotesize $\rho^*$}}};
\node (O) at (-1,.35) {\textcolor{brown}{{\footnotesize $(\rho^{-1})^*$}}};
\node (O) at (3.2,.8) {\textcolor{brown}{{\footnotesize $\rho^*$}}};

\end{tikzpicture}

%\vspace{-8pt}

\caption*{{\small Figure. Matching zeroes of the character spaces.}}
\end{figure}

Hence without loss of generality we may assume that $\A(\S_1) \simeq \A(\S_2)$ by an isomorphism $\rho$ such that $\rho^*(0) = 0$.
Now we use the explicit construction of equation (\ref{eq:analytic}).
Recall here that we identify elements in $\A(\S)$ with their corresponding holomorphic functions.
Fix $0 \neq n \in \S_1$ such that $n + 1 \in \S_1$ and set
\[
f := \rho(z^{n + 1}) \qand g := \rho(z^n).
\]
Then $\rho^*(\ze) = f(\ze)/g(\ze)$ whenever $g(\ze) \neq 0$.
However $f/g$ is holomorphic in $\bD \setminus g^{-1}(\{0\})$ and continuously extendable at any $w \in g^{-1}(\{0\})$ by $\rho^*(w)$. 
By Riemann's Theorem on removable singularities $f/g$ is holomorphically extendable to $\bD$, and thus its extension $\rho^*$ is holomorphic on $\bD$.

Clearly $\rho^*$ is not constant.
Thus by the open mapping theorem for holomorphic functions we have that $\rho^*(\bD) \subset \bD$.
By symmetry we have the same for its inverse.
Hence $\rho^*$ is a biholomorphism of $\bD$ with $\rho^*(0) = 0$.
Thus by Schwarz Lemma it follows that $\rho^*(\zeta) = e^{i\vartheta} \zeta$ for $\vartheta \in [0,2\pi]$.
Hence we get that $(\rho_{-\vartheta} \circ \rho)^* = \id$.
As rotations are automorphisms we may work with $\rho_{-\vartheta} \circ \rho$ instead of $\rho$.
Thus without loss of generality we may assume that $\rho^* = \id$ on $\bD$; and hence on $\ol{\bD}$.

To finish the first part, let $s \in \S_1$ and write $\rho(z^s) = h(z)$.
Then for every $\ol{\bD} \ni \zeta \equiv \ev_\ze \in \fM_{\S_2}$ we have that $\rho^*(\ev_\ze) = \ev_\ze \in \fM_{\S_1}$ and so
\begin{equation}\label{eq:in}
\ze^s = \ev_\ze(z^s) = \rho^*(\ev_\ze)(z^s) = \ev_\ze(\rho(z^s)) = h(\ze).
\end{equation}
As this holds for all $\ze \in \ol{\bD}$ we derive that $z^s = h(z) = \rho(z^s)$.
Since $s \in \S_1$ was arbitrary we get that $\rho = \id|_{\S_1}$ giving that $\S_1 \subset \S_2$.
By symmetry on $\rho^{-1}$ we have equality.

\smallskip

\noindent
\textbf{The multi-variable case.}
Let the continuous functions $\rho^*_i \colon \ol{\bD}^d \to \ol{\bD}$ so that the homeomorphism $\rho^* \colon \fM_{\S_2} \to \fM_{\S_1}$ is written as
\[
\rho^*(\ze) = (\rho^*_1(\ze), \dots, \rho^*_d(\ze)).
\]
First we show that every $\rho^*_i$ is holomorphic on $\bD^d$.
Fix $n_i \in \bN$ so that $n_i \bo{i}$ and $(n_i + 1) \bo{i}$ are both in $\S_1(i)$.
Set $f_i := \rho(z_i^{n_i + 1})$ and $g_i := \rho(z_i^{n_i})$.
By using equation (\ref{eq:analytic}) we can write
\[
\rho^*_i(\ze) = \frac{f_i(\ze)}{g_i(\ze)} \foral \ze \in \bD^d \setminus g_i^{-1}(\{0\}).
\]
Zero sets of analytic functions are thin sets, and by \cite[Theorem 3.4]{Ran86} the set $g_i^{-1}(\{0\})$ can be removed so that $\rho^*_i$ is holomorphic on $\bD^d$.

Now we have that $\rho^*_1$ cannot be constant as in that case we would have the contradiction $\ol{\bD}^d = \rho^*(\ol{\bD}^d) \subset \{\rho^*_1(0, \dots, 0)\} \times \ol{\bD}^{d-1}$.
By applying the open mapping theorem for holomorphic functions on several variables, e.g. \cite[Theorem 1.21]{Ran86}, we get that $\rho^*_1(\bD^d) \subset \bD$.
Likewise it follows that
\[
\rho^*_i(\bD^d) \subset \bD \foral i =1, \dots, d.
\]
Hence we have that $\rho^*(\bD^d) \subset \bD^d$, and thus by symmetry on $(\rho^*)^{-1}$, we conclude that $\rho^*(\bD^d) = \bD^d$.
Therefore $\rho^*$ restricts to a biholomorphism of the polydisc.
Recall that 
\[
\Aut(\bD^d) \simeq (\times_{i=1}^d \Aut(\bD)) \rtimes \S_d,
\]
e.g. \cite[Theorem 2, pp. 48]{Sha92}.
Hence $\rho^*$ is the product of automorphisms of $\bD$ up to a permutation of the variables, say $\si$.
As every permutation on the variables implies a completely isometric isomorphism, without loss of generality, we may substitute $S_2$ by $\si(S_2)$ so that the $\rho^*_i$ depends only on $\zeta_i$.

Note that rotating coordinate-wise on $\A(\bZ^d_+)$ restricts to automorphisms on $\A(\S_1)$ and on $\A(\S_2)$.
Therefore by applying the one-variable arguments we can rotate $\rho$ appropriately and coordinate-wise so that $\rho^*_i(0) = 0$ for all $i = 1, \dots, d$.
That is, every $\rho^*_i$ restricts to a biholomorphism of $\bD$ fixing the zero and so every $\rho^*_i|_{\bD}$ is a rotation.
Hence without loss of generality $\rho^* = \id$ on $\bD^d$ and thus on $\ol{\bD}^d$.
A computation as in equation (\ref{eq:in}) shows that $\rho = \id$ and the proof is complete.
\end{proof}

We isolate the following corollary from the proof of Theorem \ref{T:hr num}.

%%%%%%%%%%%%%%%%%%%%%%%%%%%%%%%%
\begin{corollary}
Let $\S_1 \subset \bZ^{d}$ and $\S_2 \subset \bZ^{d}$ be higher rank numerical semigroups.
Then an algebraic isomorphism between $\A(\S_1)$ and $\A(\S_2)$ is vacuum preserving if and only if it is the composition of a permutation of co-ordinates by a rotation.
\end{corollary}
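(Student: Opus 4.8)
The plan is to observe that this corollary merely names the intermediate structure already extracted inside the proof of Theorem~\ref{T:hr num}. First I would pin down the terminology: an algebraic isomorphism $\rho\colon\A(\S_1)\to\A(\S_2)$ is \emph{vacuum preserving} exactly when it intertwines the vacuum states $x\mapsto\langle xe_0,e_0\rangle$; since $\langle V_se_0,e_0\rangle$ equals $1$ for $s=0$ and $0$ otherwise, this says $\ev_0\circ\rho=\ev_0$, i.e. $\rho^*(0)=0$ in the coordinate notation of Theorem~\ref{T:hr num}. With this reading the implication ($\Leftarrow$) is immediate: a permutation of coordinates is implemented by the permutation unitary $e_t\mapsto e_{\sigma(t)}$, which fixes $e_0$ because $\sigma(0)=0$, while a coordinate-wise rotation $\rho_\vartheta=\ad_{u_\vartheta}$ has $u_\vartheta e_0=e_0$; hence any composition of the two fixes the vacuum vector and therefore preserves the vacuum state.

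For ($\Rightarrow$), let $\rho$ be a vacuum-preserving algebraic isomorphism. By Proposition~\ref{P:aut cont} both $\rho$ and $\rho^{-1}$ are continuous, and after the reduction of Remark~\ref{R:rem} (so that $\S_i\subset\bZ_+^{d_i}$ with $(\S_i)_{\sn}=\bZ_+^{d_i}$ and $\fM_{\S_i}\simeq\ol{\bD}^{d_i}$ via $\iota^*$) the induced map $\rho^*$ is a homeomorphism of polydiscs, forcing $d_1=d_2=:d$. I would then run verbatim the multivariable part of the proof of Theorem~\ref{T:hr num}: via equation~(\ref{eq:analytic}) together with \cite[Theorem~3.4]{Ran86} each component $\rho^*_i$ extends holomorphically across the thin set $g_i^{-1}(\{0\})$ to $\bD^d$; the open mapping theorem gives $\rho^*(\bD^d)\subset\bD^d$, and by symmetry $\rho^*$ restricts to a biholomorphism of $\bD^d$. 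The structure $\Aut(\bD^d)\simeq(\times_{i=1}^d\Aut(\bD))\rtimes\S_d$ of \cite[Theorem~2, pp.~48]{Sha92} then expresses $\rho^*|_{\bD^d}$ as a coordinate permutation $\sigma$ followed by disc automorphisms $\phi_1,\dots,\phi_d$. Here the hypothesis is used decisively: $\rho^*(0)=0$ forces $\phi_i(0)=0$ for every $i$, so by the Schwarz Lemma each $\phi_i$ is a rotation; thus $\rho^*|_{\bD^d}$, and by continuity $\rho^*|_{\ol{\bD}^d}$, is a coordinate permutation composed with a coordinate-wise rotation.

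It then remains to lift this from the character space back to the algebra. Theorem~\ref{T:hr num} already gives $\S_2=\sigma(\S_1)$, so there is a genuine permutation isomorphism $P\colon\A(\S_1)\to\A(\S_2)$; taking $P$ for the permutation $\sigma$ above and $\rho_\vartheta$ for the rotation angles above, the composite $\wt\rho:=\rho_{-\vartheta}\circ\rho\circ P^{-1}$ is an algebraic automorphism of $\A(\S_2)$ with $\wt\rho^*=\id$ on $\ol{\bD}^d$. Repeating the computation of equation~(\ref{eq:in}) — for $s$ in the semigroup, $\wt\rho(z^s)$ agrees with $z^s$ at every point of $\ol{\bD}^d$, hence equals $z^s$ — yields $\wt\rho=\id$, so $\rho=\rho_\vartheta\circ P$ is a rotation composed with a permutation of coordinates. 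I expect no genuinely new obstacle here; the only care needed is the bookkeeping of which permutation and which angles make $\wt\rho^*$ trivial, after which the argument is exactly the one already carried out for Theorem~\ref{T:hr num}.
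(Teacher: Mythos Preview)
Your proposal is correct and matches the paper's intent: the corollary is stated without a separate proof precisely because it is the content already established inside the proof of Theorem~\ref{T:hr num}, and you have accurately extracted that content, including the use of the structure of $\Aut(\bD^d)$, the Schwarz Lemma at $\rho^*(0)=0$, and the identification argument of equation~(\ref{eq:in}). One small cleanup: when you invoke ``Theorem~\ref{T:hr num} already gives $\S_2=\sigma(\S_1)$'' you are really citing its proof rather than its statement, and you can avoid the detour through $\wt\rho$ altogether by computing directly that for $s\in\S_1$ and $\zeta\in\ol{\bD}^d$ one has $\rho(z^s)(\zeta)=(\rho^*(\zeta))^s=c_s\,\zeta^{\sigma'(s)}$ for a unimodular constant $c_s$ and a permutation $\sigma'$, which simultaneously shows $\sigma'(\S_1)=\S_2$ and that $\rho$ is a rotation composed with that permutation.
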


%%%%%%%%%%%%%%%%%%%%%%%%%%%%%%%%

\end{document}